\newtheorem{theorem}{Theorem}[section]
\newtheorem{lemma}[theorem]{Lemma}
\newtheorem{proposition}[theorem]{Proposition}
\newtheorem{corollary}[theorem]{Corollary}
\theoremstyle{definition}
\newtheorem{example}[theorem]{Example}
\newtheorem{algorithm}[theorem]{Algorithm}
\theoremstyle{remark}
\newtheorem{remark}[theorem]{Remark}
\numberwithin{equation}{section}
\begin{document}

\title{On a transport problem and monoids of non-negative integers}

\author{Aureliano M. Robles-P\'erez\thanks{Both authors are supported by the project MTM2014-55367-P, which is funded by Mi\-nis\-terio de Econom\'{\i}a y Competitividad and Fondo Europeo de Desarrollo Regional FEDER, and by the Junta de Andaluc\'{\i}a Grant Number FQM-343. The second author is also partially supported by Junta de Andaluc\'{\i}a/Feder Grant Number FQM-5849.} \thanks{Departamento de Matem\'atica Aplicada, Universidad de Granada, 18071-Granada, Spain. \newline E-mail: {\bf arobles@ugr.es}}
\mbox{ and} Jos\'e Carlos Rosales$^*$\thanks{Departamento de \'Algebra, Universidad de Granada, 18071-Granada, Spain. \newline E-mail: {\bf jrosales@ugr.es}} }

\date{ }

\maketitle

\begin{abstract}
	A problem about how to transport profitably a group of cars leads us to study the set $T$ formed by the integers $n$ such that the system of inequalities, with non-negative integer coefficients,
	$$a_1x_1 +\cdots+ a_px_p + \alpha \leq n \leq b_1x_1 +\cdots+ b_px_p - \beta$$
	has at least one solution in ${\mathbb N}^p$. We will see that $T\cup\{0\}$ is a submonoid of $({\mathbb N},+)$. Moreover, we show algorithmic processes to compute $T$.	
\end{abstract}
\noindent {\bf Keywords:} Transport problem, Diophantine inequalities, monoids.

\medskip

\noindent{\it 2010 AMS Classification:} 20M14, 11D75

\section{Introduction}\label{sect1}

A transport company is dedicated to carry cars from the factory to the authorised dealer. For that, the company uses small and large trucks with a capacity of three and six cars, respectively. Moreover, we have the following determinants.
\begin{itemize}
	\item The use of those trucks represents, for the company, a cost of 1200 and 1500 euros, respectively.
	\item The company charges to the dealer 300 euros for each transported car.
	\item Without charge for the customer, an additional car is loaded facing possible eventualities.
\end{itemize}
If the company consider that a transport order is cost-effective when it has profits of at least 900 euros, how many cars must be transported in order to achieve that purpose?

It is clear that a transport order is profitable if and only if there exist $x,y \in {\mathbb N}$ (where ${\mathbb N}$ is the set of non-negative integers) such that
\begin{equation}\label{ineqsyst0}
\left. \begin{array}{c}
300n \geq 1200x + 150y + 900 \\[1mm] n+1 \leq 3x + 6y
\end{array} \right\}.			
\end{equation}
Simplifying the first inequality of \eqref{ineqsyst0}, we have the equivalent system
\begin{equation}\label{ineqsyst1}
\left. \begin{array}{c}
n \geq 4x + 5y + 3 \\[1mm] n \leq 3x + 6y -1	
\end{array} \right\}.			
\end{equation}
Consequently, the set $\left\{ n \in {\mathbb N} \mid \mbox{\eqref{ineqsyst1} has a solution in } {\mathbb N}^2 \right\}$ is formed by the non-negative integers which give us an affirmative answer to the proposed problem.

We can generalize the above problem in the following way: if we consider $a=(a_1,\ldots,a_p), b=(b_1,\ldots,b_p) \in {\mathbb N}^p$ and $\alpha,\beta \in {\mathbb N}$, then we want to compute the set
\begin{equation}\label{ineqsyst2}
S(a,b,\alpha,\beta)=\left\{ \begin{array}{c|c} 
\!\!\! n\in {\mathbb N} \! & \begin{array}{c}
\!\!\! a_1x_1 +\cdots+ a_px_p + \alpha \leq n \leq b_1x_1 +\cdots+ b_px_p - \beta \\[3pt] \mbox{ for some } (x_1,\ldots x_p) \in {\mathbb N}^p
\end{array}
\end{array} \!\!\!\!\! \right\}.
\end{equation}

In this work we will prove that $S(a,b,\alpha,\beta)\cup\{0\}$ is a submonoid of $({\mathbb N},+)$ (that is, a subset of ${\mathbb N}$ that is closed under addition and contains the zero element) and our main purpose will be to give an algorithm in order to compute the minimal system of generators for such a monoid.

To finish this introduction, let us observe that, with the above notation, the problem studied in \cite{forum} is associated to the particular sets $S(a,b,1,1)$.

\section{First results and the case $(\alpha,\beta)=(0,0)$}\label{sect2}

In this section will be suppose that $a=(a_1,\ldots,a_p)$, $b=(b_1,\ldots,b_p)$ belong to ${\mathbb N}^p$ and that $\alpha,\beta \in {\mathbb N}$.

\begin{lemma}\label{lem1}
	If $m,n \in S(a,b,\alpha,\beta)$, then $m+n \in S(a,b,\alpha,\beta)$.
\end{lemma}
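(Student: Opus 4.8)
The plan is to show closure under addition directly from the definition of membership.

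The plan is to prove closure under addition directly from the defining system of inequalities, by combining the witnesses for $m$ and $n$. Since $m \in S(a,b,\alpha,\beta)$, there is some $x = (x_1,\ldots,x_p) \in {\mathbb N}^p$ with $a_1x_1 + \cdots + a_px_p + \alpha \leq m \leq b_1x_1 + \cdots + b_px_p - \beta$, and likewise $n \in S(a,b,\alpha,\beta)$ yields some $y = (y_1,\ldots,y_p) \in {\mathbb N}^p$ satisfying $a_1y_1 + \cdots + a_py_p + \alpha \leq n \leq b_1y_1 + \cdots + b_py_p - \beta$. The natural candidate to witness $m+n$ is the coordinatewise sum $z = x + y = (x_1+y_1,\ldots,x_p+y_p)$, which plainly lies in ${\mathbb N}^p$; the whole argument then reduces to verifying the two bounds for $z$.

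Next I would add the two lower-bound inequalities and the two upper-bound inequalities separately. Summing the left-hand estimates gives $\sum_{i=1}^p a_i(x_i+y_i) + 2\alpha \leq m+n$, and summing the right-hand estimates gives $m+n \leq \sum_{i=1}^p b_i(x_i+y_i) - 2\beta$. What we actually need in order to certify $m+n \in S(a,b,\alpha,\beta)$ is the weaker pair $\sum_{i=1}^p a_i(x_i+y_i) + \alpha \leq m+n$ and $m+n \leq \sum_{i=1}^p b_i(x_i+y_i) - \beta$, so it remains only to pass from the doubled constants to the single ones.

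This last passage is where the hypothesis $\alpha,\beta \in {\mathbb N}$ does its work, and it is the only point requiring any comment. Because $\alpha \geq 0$, we have $\sum a_i(x_i+y_i) + \alpha \leq \sum a_i(x_i+y_i) + 2\alpha \leq m+n$, so the lower bound survives; and because $\beta \geq 0$, we have $m+n \leq \sum b_i(x_i+y_i) - 2\beta \leq \sum b_i(x_i+y_i) - \beta$, so the upper bound survives as well. Thus $z$ simultaneously satisfies both required inequalities, which exhibits $m+n$ as an element of $S(a,b,\alpha,\beta)$ and completes the proof. I do not anticipate a genuine obstacle here: the argument is entirely elementary, and the only subtlety is remembering that the extra copies of $\alpha$ and $\beta$ produced by adding the inequalities are harmless precisely because of their non-negativity.
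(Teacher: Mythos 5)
Your proof is correct and follows exactly the paper's own argument: add the two witness systems coordinatewise to obtain the bounds with $2\alpha$ and $2\beta$, then relax to $\alpha$ and $\beta$ using their non-negativity. The only difference is that you spell out explicitly why the relaxation step is valid, which the paper leaves implicit.
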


\begin{proof}
	If $m,n \in S(a,b,\alpha,\beta)$, then there exist $(x_1,\ldots,x_p), (y_1,\ldots,y_p) \in {\mathbb N}^p$ such that 
	$$a_1x_1 +\cdots+ a_px_p + \alpha \leq m \leq b_1x_1 +\cdots+ b_px_p - \beta$$
	and
	$$a_1y_1 +\cdots+ a_py_p + \alpha \leq n \leq b_1y_1 +\cdots+ b_py_p - \beta.$$
	Therefore,
	$$a_1(x_1+y_1) +\cdots+ a_p(x_p+y_p) + 2\alpha \leq m+n \leq b_1(x_1+y_1) +\cdots+ b_p(x_p+y_p) - 2\beta.$$
	Thus,
	$$a_1(x_1+y_1) +\cdots+ a_p(x_p+y_p) + \alpha \leq m+n \leq b_1(x_1+y_1) +\cdots+ b_p(x_p+y_p) - \beta,$$
	and, consequently, $m+n \in S(a,b,\alpha,\beta)$.
\end{proof}

As an immediate consequence of the above result we have the following proposition.

\begin{proposition}\label{prop2}
	$S(a,b,\alpha,\beta) \cup \{0\}$ is a submonoid of $({\mathbb N},+)$.
\end{proposition}

Let $X$ be a non-empty subset of ${\mathbb N}^k$. We will denote by $\langle X \rangle$ the submonoid of $({\mathbb N}^k,+)$ generated by $X$, that is, 
$$\langle X \rangle = \left\{ \lambda_1x_1+\cdots+\lambda_nx_n \mid n \in {\mathbb N} \setminus \{0\}, \; x_1,\ldots,x_n \in X, \; \lambda_1,\ldots,\lambda_n \in {\mathbb N} \right\}.$$
If $M$ is a submonoid of $({\mathbb N}^k,+)$ and  $M = \langle X \rangle$, then we will say that $X$ is a \emph{system of generators} of (for) $M$ or, equivalently, that $M$ is generated by $X$. In addition, if no proper subset of $X$ generates $M$, then we will say that $X$ is a \emph{minimal system of generators} of (for) $M$.

The next result is \cite[Corollary~2.8]{springer}.

\begin{proposition}\label{prop3}
	Every submonoid of $({\mathbb N},+)$ admit a unique minimal system of generators. Moreover, such a system is finite.	
\end{proposition}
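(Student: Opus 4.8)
The plan is to identify the unique minimal system of generators explicitly as the set of \emph{atoms} of the monoid, and then to establish its finiteness by a residue argument. Fix a submonoid $M$ of $(\mathbb{N},+)$ and write $M^* = M \setminus \{0\}$. I would define $A = M^* \setminus (M^* + M^*)$, the set of nonzero elements of $M$ that cannot be written as a sum of two nonzero elements of $M$, and claim that $A$ is the sought system of generators.

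First I would show that $A$ is contained in every system of generators. If $X$ generates $M$ and $x \in A$, write $x = \lambda_1 x_1 + \cdots + \lambda_n x_n$ with $x_i \in X \setminus \{0\}$ and $\lambda_i \geq 1$; this sum is nonempty since $x \neq 0$. Were $\lambda_1 + \cdots + \lambda_n \geq 2$, the element $x$ would split as a sum of two elements of $M^*$, contradicting $x \in A$; hence $x = x_j \in X$ for a single index, proving $A \subseteq X$. Next I would show that $A$ itself generates $M$, arguing by strong induction on the magnitude of $m \in M^*$: if $m \in A$ there is nothing to do, and otherwise $m = a + b$ with $a, b \in M^*$ strictly smaller than $m$, so $a, b \in \langle A \rangle$ by induction and therefore $m \in \langle A \rangle$. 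These two facts together give that $A$ generates $M$ and is contained in every generating set, whence $A$ is the unique minimal system of generators: any minimal $X$ contains $A$, and since $A$ already generates, minimality forces $X = A$.

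It remains to prove finiteness, which is where the arithmetic of $\mathbb{N}$ enters and which I expect to be the crux. If $M = \{0\}$ then $A = \emptyset$; otherwise let $d$ be the least positive element of $M$. The key observation is that every positive multiple of $d$ lies in $M$, since $M$ is closed under addition. Hence, if two distinct atoms $x, y \in A$ were congruent modulo $d$, say $x < y$, then $y - x$ would be a positive multiple of $d$ and so belong to $M^*$, giving $y = x + (y - x) \in M^* + M^*$ and contradicting $y \in A$. Therefore distinct atoms occupy distinct residue classes modulo $d$, so $|A| \leq d$ and $A$ is finite.

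The only genuine subtlety is this last step: in an arbitrary commutative monoid a minimal generating set can fail to be finite, so finiteness must exploit the specific structure of $(\mathbb{N},+)$. The residue argument above isolates exactly what is needed — that the least positive element generates a set of multiples lying inside $M$ — and I would present it as the decisive point. The remaining verifications (that $A$ is contained in every generating set and that the descent terminates) are routine consequences of well-ordering and of the definition of $A$.
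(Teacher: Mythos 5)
Your proof is correct, but note that there is nothing in the paper to compare it against: the paper does not prove this proposition at all, it simply quotes it as Corollary~2.8 of \cite{springer}. Your argument is essentially the standard one given in that reference. You identify the unique minimal generating set as the set of atoms $A = M^{*}\setminus (M^{*}+M^{*})$, show it is contained in every generating set (by splitting off one generator from any representation of total coefficient at least $2$), show it generates by strong induction on $m\in M^{*}$ (valid, since in a sum $m=a+b$ with $a,b\in M^{*}$ both summands are strictly smaller), and conclude uniqueness from these two facts. The finiteness step is also sound and is the decisive point, exactly as you say: if $d=\min M^{*}$, then every positive multiple of $d$ lies in $M$, so two distinct atoms in the same residue class modulo $d$ would differ by an element of $M^{*}$, forcing the larger one into $M^{*}+M^{*}$; hence $|A|\leq d$. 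This residue argument is equivalent to the Ap\'ery-set argument used in \cite{springer} (every atom other than $d$ satisfies $x-d\notin M$, and there is at most one such element per residue class modulo $d$), so your proposal is a faithful, self-contained reconstruction of the cited proof rather than a genuinely different route.
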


Our objective in this work is to show an algorithm to compute the minimal system of generators of $S(a,b,\alpha,\beta) \cup \{0\}$. In this section we will solve the problem when $(\alpha,\beta)=(0,0)$.

A submonoid $M$ of $({\mathbb N}^p,+)$ is \emph{finitely generated} if there exists a finite set $X$ such that $M = \langle X \rangle$. From Proposition~\ref{prop3} we know that every  submonoid of $({\mathbb N},+)$ is finitely generated. However, if $k \geq 2$, then there exist submonoids of $({\mathbb N}^k,+)$ which are not finitely generated.

Let $z=(z_1,\ldots,z_p) \in {\mathbb Z}^p$ (where ${\mathbb Z}$ is the set of integers) and let $A(z)=\left\{(x_1,\ldots,x_p) \in {\mathbb N}^p \mid z_1x_1+\cdots+z_px_p \geq 0 \right\}$. It is well known that $A(z)$ is a fini\-te\-ly generated submonoid of $({\mathbb N}^p,+)$ and, moreover, in \cite{ajili} it is shown an algorithm to compute a finite system of generators of $A(z)$.

If $s,t\in {\mathbb Z}$ (with $s\leq t$), then we will denote by $[s,t] = \left\{x\in {\mathbb Z} \mid s \leq x \leq t \right\}$.

\begin{theorem}\label{thm4}
	Let $\{m_1,\ldots,,m_q \}$ be a system of generators of $A(b-a)$, where $m_i=(m_{i1},\ldots,m_{ip})$ for all $i\in \{1,\ldots,q\}$. Then $S(a,b,0,0) \cup \{0\}$ is the submonoid of $({\mathbb N},+)$ generated by	${\mathrm L}=\bigcup_{i=1}^q [a_1m_{i1}+\cdots+a_pm_{ip},b_1m_{i1}+\cdots+b_pm_{ip}]$.
\end{theorem}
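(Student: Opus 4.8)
The plan is to prove the two inclusions $\langle {\mathrm L}\rangle \subseteq S(a,b,0,0)\cup\{0\}$ and $S(a,b,0,0)\cup\{0\}\subseteq\langle{\mathrm L}\rangle$ separately. Throughout I abbreviate $a\cdot x = a_1x_1+\cdots+a_px_p$ and $b\cdot x = b_1x_1+\cdots+b_px_p$, and I set $\ell_i = a\cdot m_i$ and $u_i = b\cdot m_i$, so that ${\mathrm L}=\bigcup_{i=1}^q[\ell_i,u_i]$. The first thing to record is the reformulation that, since any $x\in{\mathbb N}^p$ witnessing $a\cdot x\leq n\leq b\cdot x$ must satisfy $(b-a)\cdot x\geq 0$ and hence lie in $A(b-a)$, we have
$$S(a,b,0,0)=\bigcup_{x\in A(b-a)}\left[\,a\cdot x,\; b\cdot x\,\right].$$

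For the inclusion $\langle {\mathrm L}\rangle\subseteq S(a,b,0,0)\cup\{0\}$, I would observe that each nonzero element of $[\ell_i,u_i]$ lies in $S(a,b,0,0)$, using the witness $x=m_i$; hence ${\mathrm L}\subseteq S(a,b,0,0)\cup\{0\}$. Since $S(a,b,0,0)\cup\{0\}$ is a submonoid of $({\mathbb N},+)$ by Proposition~\ref{prop2}, and $\langle{\mathrm L}\rangle$ is by definition the smallest submonoid containing ${\mathrm L}$, the inclusion follows immediately.

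The substantial direction is $S(a,b,0,0)\cup\{0\}\subseteq\langle{\mathrm L}\rangle$. Let $n\in S(a,b,0,0)$, the case $n=0$ being trivial. By the reformulation there is $x\in A(b-a)$ with $a\cdot x\leq n\leq b\cdot x$, and since $\{m_1,\ldots,m_q\}$ generates $A(b-a)$ we may write $x=\sum_{i=1}^q\lambda_i m_i$ with $\lambda_i\in{\mathbb N}$. Taking dot products with $a$ and with $b$ gives
$$\sum_{i=1}^q\lambda_i\ell_i\;\leq\; n\;\leq\;\sum_{i=1}^q\lambda_i u_i.$$
It therefore suffices to establish the following elementary decomposition fact: if $[s_1,t_1],\ldots,[s_N,t_N]$ are intervals of integers with $\sum_k s_k\leq n\leq\sum_k t_k$, then one can choose $c_k\in[s_k,t_k]$ with $\sum_k c_k=n$. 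Applying this to the $N=\sum_i\lambda_i$ intervals obtained by listing each $[\ell_i,u_i]$ with multiplicity $\lambda_i$, we get $n=\sum_k c_k$ as a sum of elements of ${\mathrm L}$, whence $n\in\langle{\mathrm L}\rangle$.

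The main work, and the only genuinely nontrivial point, is this decomposition fact, which I would prove by a greedy increment argument. Starting from $c_k=s_k$, whose sum is at most $n$, I distribute the deficit $d=n-\sum_k s_k$ among the intervals, raising each $c_k$ by some $d_k\in[0,t_k-s_k]$; this is possible precisely because $0\leq d\leq\sum_k(t_k-s_k)$, the right-hand inequality being exactly $n\leq\sum_k t_k$. Adding $d$ in unit steps (equivalently, filling the intervals one at a time) keeps the running total from overshooting and terminates exactly at $n$, since enough total capacity is available. Minor points to verify are that any terms $c_k=0$ (arising when $\ell_i=0$) may simply be omitted from the sum, and that the degenerate witness $x=0$ forces $a\cdot x=b\cdot x=0$ and so only produces $n=0$, consistently with the trivial case.
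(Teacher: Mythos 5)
Your proof is correct, and it shares the paper's skeleton: the same reformulation of $S(a,b,0,0)$ as the union $\bigcup_{x\in A(b-a)}[a_1x_1+\cdots+a_px_p,\,b_1x_1+\cdots+b_px_p]$, the same easy inclusion via Proposition~\ref{prop2}, and the same decomposition of a witness as $\lambda_1m_1+\cdots+\lambda_qm_q$. The difference lies in how the reverse inclusion is finished. The paper argues with an arbitrary submonoid $T\supseteq{\mathrm L}$ and shows, by induction on $\lambda_1+\cdots+\lambda_q$, that $T$ contains every interval $[a_1x_1+\cdots+a_px_p,\,b_1x_1+\cdots+b_px_p]$ with $(x_1,\ldots,x_p)\in A(b-a)$; the inductive step peels off one generator $m_i$ and silently uses the fact that the sumset of two nonempty integer intervals $[s_1,t_1]+[s_2,t_2]$ is all of $[s_1+s_2,t_1+t_2]$ --- this is what hides behind the word ``consequently'' in the paper's proof. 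You instead work element by element: you isolate the equivalent $N$-interval statement as an explicit greedy decomposition lemma and use it to write $n$ directly as a sum of elements of ${\mathrm L}$. The two arguments carry the same mathematical content (your lemma is exactly what the paper's induction establishes), but yours makes explicit the one nontrivial combinatorial step the paper glosses over, and it records the degenerate cases (the witness $x=0$, and summands $c_k=0$) that the paper passes over; in exchange, the paper's formulation is slightly slicker, since working inside a monoid $T$ absorbs those edge cases automatically.
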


\begin{proof}
	It is clear that
		$$S(a,b,0,0) = \bigcup_{(x_1,\ldots,x_p) \in A(b-a)} [a_1x_1+\cdots+a_px_p,b_1x_1+\cdots+b_px_p].$$
	Therefore, $S(a,b,0,0) \cup \{0\}$ is a submonoid of $({\mathbb N},+)$ which contains $L$. In order to finish the proof, we will see that, if $T$ is a submonoid of $({\mathbb N},+)$ which contains ${\mathrm L}$, then $S(a,b,0,0) \subseteq T$. For that we will prove that, if $(x_1,\ldots,x_p) \in A(b-a)$, then $[a_1x_1+\cdots+a_px_p,b_1x_1+\cdots+b_px_p] \subseteq T$.
	
	If $(x_1,\ldots,x_p) \in A(b-a)$, then we have $(x_1,\ldots,x_p)=\lambda_1 m_1 + \cdots + \lambda_q m_q$ for some $\lambda_1, \ldots, \lambda_q \in {\mathbb N}$. By induction over $\lambda_1 + \cdots + \lambda_q$, we will show that $[a_1x_1+\cdots+a_px_p,b_1x_1+\cdots+b_px_p] \subseteq T$. Thus, if $\lambda_1 + \cdots + \lambda_q=0$ the result is obvious. Let us suppose that $\lambda_1 + \cdots + \lambda_q \geq 1$ and let $i\in \{1,\ldots,q \}$ such that $\lambda_i\not=0$. If we take $(y_1,\ldots,y_p) = \lambda_1 m_1 + \cdots + (\lambda_i-1)m_i + \cdots + \lambda_q m_q$, then $(x_1,\ldots,x_p) = (y_1,\ldots,y_p) + m_i$ and, by the induction hypothesis, we get that $[a_1y_1+\cdots+a_py_p,b_1y_1+\cdots+b_py_p] \subseteq T$. Since $T$ is a monoid, then $[a_1y_1+\cdots+a_py_p,b_1y_1+\cdots+b_py_p] + [a_1m_{i1}+\cdots+a_pm_{ip},b_1m_{i1}+\cdots+b_pm_{ip}] \subseteq T$ and, consequently, $[a_1x_1+\cdots+a_px_p,b_1x_1+\cdots+b_px_p] \subseteq T$.
\end{proof}

Now, we are going to describe an algorithmic process, given in \cite{forum}, in order to compute a system of generators for ${\rm A}(z)$. Thereby, we get a self-contained paper and, in addition, we will be able to describe examples without necessity of referencing to \cite{ajili}.

Let ${\rm B}(z)= \{(x_1,\ldots,x_p,x_{p+1})\in {\mathbb N}^{p+1} \mid z_1x_1+\cdots+z_px_p-x_{p+1}=0 \}$. It is well known (see \cite{monoides}) that ${\rm B}(z)$ is a finitely generated submonoid of $({\mathbb N}^{p+1},+)$ and, in addition, its set of minimal generators coincide with the minimal elements (with the usual order in ${\mathbb N}^{p+1}$) of the set ${\rm B}(z) \setminus \{(0,\ldots,0)\}$. Moreover, we know that, if $(x_1,\ldots,x_p,x_{p+1})$ is a minimal element of ${\rm B}(z) \setminus \{(0,\ldots,0)\}$, then $x_1+\cdots+x_{p+1} \leq |z_1|+\cdots+|z_p|+2$ (Pottier's bound, \cite{pottier}). Finally, it is easy to see that, if $\{b_1,\ldots,b_q\}$ is a system of generators of ${\rm B}(z)$, then $\{\pi(b_1),\ldots,\pi(b_q)\}$ is a system of generators for ${\rm A}(z)$ (where $\pi(x_1,\ldots,x_p,x_{p+1})=(x_1,\ldots,x_p)$). Therefore, we have an algorithm to compute a system of generators for ${\rm A}(z)$.

\begin{example}\label{exmp5}
	We are going to compute the minimal system of generators for $S=S\big((4,5),(3,6),0,0\big)\cup\{0\}$. We begin computing a system of generators for $A(-1,1)=\{(x,y) \in {\mathbb N}^2 \mid -x+y \geq 0 \}$. For that, we compute the minimal elements of $B(-1,1) \setminus \{(0,0,0)\}$, where $B(-1,1)=\{ (x,y,z) \in {\mathbb N}^3 \mid -x+y-z=0 \}$. By applying the Pottier's bound, we have that $(1,1,0),(0,1,1)$ are such minimal elements and, in consequence, $\{(1,1),(0,1)\}$ is a system of generators for $A(-1,1)$. Therefore, by applying Theorem~\ref{thm4}, we conclude that $[9,9] \cup [5,6] =\{5,6,9\}$ is a system of generators for $S$. Thus, $S=\{0,5,6,9,10,11,12,14,\to\}$ (where the symbol $\to$ means that every number greater than 14 belongs to $S$).
\end{example}

\section{The case $(\alpha,\beta)\not=(0,0)$}\label{sect3}

Let ${\mathcal A}=\left\{(x_1,\ldots,x_p) \in {\mathbb N}^p \mid (b_1-a_1)x_1 + \cdots + (b_p-a_p)x_p \geq \alpha+\beta \right\}$. Then it is clear that 
$$S(a,b,\alpha,\beta) = \bigcup_{(x_1,\ldots,x_p) \in {\mathcal A}} [a_1x_1+\cdots+a_px_p+\alpha,b_1x_1+\cdots+b_px_p-\beta].$$
Let us define over ${\mathcal A}$ the binary relation $\leq_{A(b-a)}$ as follow:
$$x \leq_{A(b-a)} y \;\mbox{ if }\; y-x \in A(b-a).$$
In \cite{london} it is shown that $\leq_{A(b-a)}$ is an order relation. Moreover, from the results in such a work, we also know that, if ${\mathcal D}$ is a system of generators of $A(b-a)$ and ${\mathcal C}=\mathrm{minimals}_{\leq_{A(b-a)}}{\mathcal A}$, then ${\mathcal A} = {\mathcal C} + \langle {\mathcal D} \rangle$. Furthermore, in \cite{london} it is also given an algorithm to compute ${\mathcal C}$ and ${\mathcal D}$.

Let $B \subseteq {\mathbb N}$. We will say that $M$ is a \emph{$B$-monoid} if $M$ is a submonoid of $({\mathbb N},+)$ fulfilling that $(M\setminus \{0\}) + B \subseteq M$.

It is clear that ${\mathbb N}$ is a $B$-monoid and that the intersection of $B$-monoids is another $B$-monoid. Therefore, we can give the notion of \emph{smallest $B$-monoid which contains a given set $A$ of non-negative integers}.

\begin{theorem}\label{thm6}
	With the above notation, let $${\mathrm C} = \bigcup_{(c_1,\ldots,c_p) \in {\mathcal C}} [a_1c_1+\cdots+a_pc_p+\alpha, b_1c_1+\cdots+b_pc_p-\beta]$$ and let $${\mathrm D} = \bigcup_{(d_1,\ldots,d_p) \in {\mathcal D}} [a_1d_1+\cdots+a_pd_p, b_1d_1+\cdots+b_pd_p].$$ Then $S(a,b,\alpha,\beta) \cup \{0\}$ is the smallest ${\mathrm D}$-monoid which contains ${\mathrm C}$.
\end{theorem}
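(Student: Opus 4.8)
**

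The plan is to prove two inclusions: that $S(a,b,\alpha,\beta)\cup\{0\}$ is itself a ${\mathrm D}$-monoid containing ${\mathrm C}$, and that it is contained in every ${\mathrm D}$-monoid containing ${\mathrm C}$.

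For the first direction, I would start from the decomposition $S(a,b,\alpha,\beta)=\bigcup_{(x_1,\ldots,x_p)\in{\mathcal A}}[a_1x_1+\cdots+a_px_p+\alpha,\,b_1x_1+\cdots+b_px_p-\beta]$ stated before the theorem. Since every $(c_1,\ldots,c_p)\in{\mathcal C}$ lies in ${\mathcal A}$, the interval $[a_1c_1+\cdots+a_pc_p+\alpha,\,b_1c_1+\cdots+b_pc_p-\beta]$ is one of the intervals in this union, so ${\mathrm C}\subseteq S(a,b,\alpha,\beta)$; hence ${\mathrm C}$ is contained in the monoid $S(a,b,\alpha,\beta)\cup\{0\}$ (which is a monoid by Proposition~\ref{prop2}). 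To see it is a ${\mathrm D}$-monoid I must check $\big((S(a,b,\alpha,\beta)\cup\{0\})\setminus\{0\}\big)+{\mathrm D}\subseteq S(a,b,\alpha,\beta)$. Take $n\in S(a,b,\alpha,\beta)$, so $n$ lies in some interval indexed by $x\in{\mathcal A}$, and take $e\in{\mathrm D}$, so $e$ lies in some interval indexed by $d\in{\mathcal D}$. Adding the two interval inequalities gives $a\cdot(x+d)+\alpha\le n+e\le b\cdot(x+d)-\beta$, and since ${\mathcal A}+\langle{\mathcal D}\rangle={\mathcal A}$ (because ${\mathcal A}={\mathcal C}+\langle{\mathcal D}\rangle$ and adding generators of $A(b-a)$ keeps a point in ${\mathcal A}$), we have $x+d\in{\mathcal A}$, so $n+e\in S(a,b,\alpha,\beta)$. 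This establishes that $S(a,b,\alpha,\beta)\cup\{0\}$ is a ${\mathrm D}$-monoid containing ${\mathrm C}$.

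For the minimality direction, let $T$ be any ${\mathrm D}$-monoid with ${\mathrm C}\subseteq T$; I must show $S(a,b,\alpha,\beta)\subseteq T$. Using ${\mathcal A}={\mathcal C}+\langle{\mathcal D}\rangle$, any $x\in{\mathcal A}$ can be written $x=c+\lambda_1 d_1+\cdots+\lambda_q d_q$ with $c\in{\mathcal C}$, the $d_j\in{\mathcal D}$, and $\lambda_j\in{\mathbb N}$. I would argue by induction on $\lambda_1+\cdots+\lambda_q$ that the whole interval $[a\cdot x+\alpha,\,b\cdot x-\beta]\subseteq T$. In the base case $x=c\in{\mathcal C}$, so the interval is a piece of ${\mathrm C}\subseteq T$. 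For the inductive step, write $x=y+d_j$ where $y$ has smaller coefficient sum; the interval for $y$ lies in $T$ by hypothesis, and the interval $[a\cdot d_j,\,b\cdot d_j]\subseteq{\mathrm D}$. The key point is that adding these intervals reproduces the interval for $x$: summing the inequalities yields $a\cdot x+\alpha\le (\text{element of } y\text{-interval})+(\text{element of } d_j\text{-interval})\le b\cdot x-\beta$, and since the integer points of an interval $[u,v]$ plus those of $[u',v']$ cover exactly $[u+u',v+v']$ when these are nonempty integer intervals, the $x$-interval is contained in $(y\text{-interval})+{\mathrm D}$. Because $y$-interval lies in $T\setminus\{0\}$ (the endpoints are non-negative and the left endpoint is positive whenever $\alpha\ge 1$ or the index is nonzero) and $T$ is a ${\mathrm D}$-monoid, this sum lies in $T$, completing the induction.

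The main obstacle I anticipate is the ${\mathrm D}$-monoid condition in the inductive step, which only guarantees $(T\setminus\{0\})+{\mathrm D}\subseteq T$ rather than $T+{\mathrm D}\subseteq T$; I must therefore ensure the element of the $y$-interval being used is nonzero, or otherwise handle the case where $a\cdot y+\alpha=0$ separately. This forces care about when an interval can contain $0$: since the intervals in ${\mathrm C}$ have left endpoint $a\cdot c+\alpha$, the value $0$ can only occur if $\alpha=0$ and $a\cdot c=0$, so I would treat that degenerate endpoint explicitly, checking that the corresponding covering still goes through (using that the remaining elements of the interval are positive, or that the relevant sum can be rewritten to avoid $0$). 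A secondary technical point worth stating cleanly is the elementary fact that the sum of two nonempty integer intervals $[u,v]+[u',v']$ equals $[u+u',v+v']$, which is what makes the interval arithmetic in both directions work.
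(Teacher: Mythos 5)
Your proposal follows essentially the same route as the paper's own proof of Theorem~\ref{thm6}: the paper likewise splits the statement into the two inclusions, dismisses the first as clear, and proves minimality by writing each $(x_1,\ldots,x_p)\in{\mathcal A}$ as $c+\lambda_1d_1+\cdots+\lambda_qd_q$ with $c\in{\mathcal C}$ (using ${\mathcal A}={\mathcal C}+\langle{\mathcal D}\rangle$) and inducting on $\lambda_1+\cdots+\lambda_q$ ``following the reasoning exposed in Theorem~\ref{thm4}.'' You make explicit two points the paper leaves implicit: the verification of the ${\mathrm D}$-monoid property of $S(a,b,\alpha,\beta)\cup\{0\}$ (via $x+d\in{\mathcal A}$), and the interval-addition identity $[u,v]+[u',v']=[u+u',v+v']$ for nonempty integer intervals, which is the engine of the induction in both arguments.

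However, the obstacle you flag in your last paragraph is not a technicality that can be ``handled'': it is a genuine hole, shared by the paper's proof, and in the degenerate case the statement itself is false, so no patch exists. Concretely, take $p=2$, $a=(0,1)$, $b=(1,1)$, $\alpha=0$, $\beta=1$. Then $S(a,b,\alpha,\beta)={\mathbb N}$ (given $n$, take $(x_1,x_2)=(n+1,0)$), while ${\mathcal A}=\{(x_1,x_2)\in{\mathbb N}^2\mid x_1\geq 1\}$, $A(b-a)={\mathbb N}^2$, ${\mathcal C}=\{(1,0)\}$, ${\mathcal D}=\{(1,0),(0,1)\}$, hence ${\mathrm C}=[0,0]=\{0\}$ and ${\mathrm D}=\{0,1\}$. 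The monoid $\{0\}$ is a ${\mathrm D}$-monoid containing ${\mathrm C}$ (the condition $(T\setminus\{0\})+{\mathrm D}\subseteq T$ is vacuous for $T=\{0\}$), so the smallest ${\mathrm D}$-monoid containing ${\mathrm C}$ is $\{0\}\neq{\mathbb N}$. Note also that your parenthetical claim that the left endpoint is positive whenever ``the index is nonzero'' is exactly the weak point this example exploits: $y=(1,0)\neq(0,0)$, yet $a_1y_1+a_2y_2+\alpha=0$ because $a$ has a zero coordinate. The theorem, your argument, and the paper's argument are all correct precisely under the additional hypothesis that $a_1c_1+\cdots+a_pc_p+\alpha\geq 1$ for every $(c_1,\ldots,c_p)\in{\mathcal C}$ (equivalently $0\notin{\mathrm C}$, equivalently $0\notin S(a,b,\alpha,\beta)$; this holds automatically if $\alpha\geq 1$, or if every $a_i\geq 1$, since $(\alpha,\beta)\neq(0,0)$ forces $c\neq(0,\ldots,0)$). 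Under that hypothesis every interval in your induction consists of positive integers, because $y=c+\sum_i\lambda_id_i$ gives $a_1y_1+\cdots+a_py_p+\alpha\geq a_1c_1+\cdots+a_pc_p+\alpha\geq 1$, so the ${\mathrm D}$-monoid property applies at each step and your proof is complete; without it, neither your proof nor the paper's can be repaired.
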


\begin{proof}
	It is clear that $S(a,b,\alpha,\beta)\cup\{0\}$ is a ${\mathrm D}$-monoid which contains ${\mathrm C}$. Now, let us see that, if $T$ is a ${\mathrm D}$-monoid containing ${\mathrm C}$, then $S(a,b,\alpha,\beta) \cup \{0\} \subseteq T$. For that, we will prove that, if $(x_1,\ldots,x_p) \in {\mathcal A}$, then
	\begin{equation}\label{eqaux6}
	[a_1x_1+\cdots+a_px_p+\alpha,b_1x_1+\cdots+b_px_p-\beta] \subseteq T.
	\end{equation}
	
	Let us suppose that ${\mathcal D}=\{d_1,\ldots,d_q \}$, where $d_i=(d_{i1},\ldots,d_{ip})$ for all $i\in\{1,\ldots,q \}$. Since $(x_1,\ldots,x_p) \in {\mathcal A}$, then there exist $c\in {\mathcal C}$ and $\lambda_1,\ldots,\lambda_q \in {\mathbb N}$ such that $(x_1,\ldots,x_p)=c+\lambda_1 d_1 + \cdots + \lambda_q d_q$. Now, in order to show \eqref{eqaux6}, we use induction over $\lambda_1 + \cdots + \lambda_q$ following the reasoning exposed in Theorem~\ref{thm4}.
\end{proof}

At this moment, we propose to give an algorithm that allows us to compute the smallest $B$-monoid containing a given set $A$.

\begin{proposition}\label{prop7}
	Let $M$ be a submonoid of $({\mathbb N},+)$ and let $A=\{a_1,\ldots,a_n \} \subseteq {\mathbb N} \setminus \{0\}$ be a system of generators of $M$. Then $M$ is a $B$-monoid if and only if $A+B \subseteq M$.
\end{proposition}

\begin{proof}
	The necessary condition is trivial. So, let us see the sufficient one.
	
	If $m \in M \setminus \{0\}$, then there exists $(\lambda_1,\ldots,\lambda_n) \in {\mathbb N}^n \setminus \{(0,\ldots,0)\}$ such that $m=\lambda_1 a_1 + \cdots + \lambda_n a_n$. Now, by induction over $\lambda_1 + \cdots + \lambda_n$, we will prove that $m+b \in M$ for all $b \in B$. Firstly, the result is trivially true for $\lambda_1 + \cdots + \lambda_n=1$. Let us suppose that $\lambda_1 + \cdots + \lambda_n \geq 2$ and let $i\in \{1,\ldots,n \}$ such that $\lambda_i \not=0$. By the induction hypothesis, we easily deduce that $(m-a_i)+b \in M$. Therefore, $(m-a_i)+b+a_i \in M$ and, consequently, $m+b \in M$.
\end{proof}

If $M$ is a submonoid of $({\mathbb N},+)$, we will denote by ${\rm msg}(M)$ the minimal system of generators of $M$.

\begin{algorithm}\label{alg8}
	{\textrm
		INPUT: A finite set $A$ of positive integers. \par
		OUTPUT: The minimal system of generators for the smallest $B$-monoid containing $A$. \par
		(1) $X={\rm msg}(\langle A \rangle)$. \par
		(2) $Y=X \cup \left( X+B \right)$. \par
		(3) If ${\rm msg}(\langle Y \rangle)=X$, then return $X$. \par
		(4) Set $X={\rm msg}(\langle Y \rangle)$ and go to (2).
	}
\end{algorithm}

By using Propositon~\ref{prop7}, it is easy to see that the above algorithm operates suitably. On the other hand, observe that the most complex process in Algorithm~\ref{alg8} is to compute ${\rm msg}(\langle Y \rangle)$, that is, compute the minimal system of generators for a monoid starting from any system of generators of it. For this purpose, we can use the \texttt{GAP} package \texttt{numericalsgps} (see \cite{numericalsgps}).

\begin{example}\label{exmp9}
	By using Algorithm~\ref{alg8}, we are going to compute the smallest $\{2,3\}$-monoid containing $\{5,7\}$.
	\begin{itemize}
		\item $X=\{5,7\}$.
		\item $Y=\{5,7,8,9,10\}$.
		\item ${\rm msg}(\langle Y \rangle)=\{5,7,8,9\}$.
		\item $X=\{5,7,8,9\}$.
		\item $Y=\{5,7,8,9,10,11,12\}$.
		\item ${\rm msg}(\langle Y \rangle)=\{5,7,8,9,11\}$.
		\item $X=\{5,7,8,9,11\}$.
		\item $Y=\{5,7,8,9,10,11,12,13,14\}$.
		\item ${\rm msg}(\langle Y \rangle)=\{5,7,8,9,11\}=X$.
	\end{itemize}
	Therefore, $\langle 5,7,8,9,11 \rangle = \{0,5,7,\to \}$ is the smallest $\{2,3\}$-monoid containing $\{5,7\}$.
\end{example}

As a consequence of the previous results, we have an algorithm which allows us to compute the minimal system of generators of the monoid $S(a,b,\alpha,\beta) \cup \{0\}$.

\begin{algorithm}\label{alg10}
	{\textrm
		INPUT: $a,b \in {\mathbb N}^p$ and $\alpha,\beta \in {\mathbb N}$ such that $(\alpha,\beta)\not=(0,0)$. \par
		OUTPUT: The minimal system of generators of $S(a,b,\alpha,\beta) \cup \{0\}$. \par
		(1) Compute a finite system of generators ${\mathcal D}$ for $A(b-a)$. \par
		(2) Compute ${\mathcal C}=\mathrm{minimals}_{\leq_{A(b-a)}}{\mathcal A}$, where
		$$\mbox{} \hspace{20pt} {\mathcal A}=\left\{(x_1,\ldots,x_p) \in {\mathbb N}^p \mid (b_1-a_1)x_1 + \cdots + (b_p-a_p)x_p \geq \alpha+\beta \right\}.$$ \par
		(3) ${\mathrm C} = \bigcup_{(c_1,\ldots,c_p) \in {\mathcal C}} [a_1c_1+\cdots+a_pc_p+\alpha, b_1c_1+\cdots+b_pc_p-\beta]$ and \par
		\mbox{} \hspace{9.75pt} ${\mathrm D} = \bigcup_{(d_1,\ldots,d_p) \in {\mathcal D}} [a_1d_1+\cdots+a_pd_p, b_1d_1+\cdots+b_pd_p]$. \par
		(4) Return the minimal system of generators of the smallest ${\mathrm D}$-monoid containing ${\mathrm C}$.
	}
\end{algorithm}

In Section~\ref{sect2}, we have described an algorithmic process to compute ${\mathcal D}$. At the beginning of this section, we also have commented that, from the results of \cite{london}, we have an algorithm to compute ${\mathcal C}$. In order to give a self-contained paper and to show examples without necessity of referencing to \cite{london}, we are going to mention briefly how to build ${\mathcal C}$. For that, we need to consider the following equations and inequalities:
\begin{align}
(b_1-a_1)x_1+\cdots+(b_p-a_p)x_p \geq \alpha + \beta, \label{eq:1}\\
(b_1-a_1)x_1+\cdots+(b_p-a_p)x_p-x_{p+1} - (\alpha + \beta)x_{p+2} = 0, \label{eq:2}\\
(b_1-a_1)x_1+\cdots+(b_p-a_p)x_p \geq 0. \label{eq:3}
\end{align} 

The next result is a direct consequence of the results from \cite[Section~2]{london}.

\begin{proposition}\label{prop11}
	Let $A = \{\alpha_1,\ldots,\alpha_t \}$, with $\alpha_i = (\alpha_{i_1},\ldots,\alpha_{i_{p+2}})$, be a system of generators of the monoid formed by the set of non-negative solutions of \eqref{eq:2}. Assume that $\alpha_1,\ldots,\alpha_d$ are the elements in $A$ with the last coordinate equal to zero and $\alpha_{d+1},\ldots,\alpha_g$ are those elements in $A$ with the last coordinate equal to one. Let $\pi:{\mathbb N}^{p+2} \to {\mathbb N}^p$ be the projection onto the first $p$ coordinates. Then the set
	of non-negative integer solutions of \eqref{eq:1} is $\{\pi(\alpha_{d+1}),\ldots,\pi(\alpha_g)\} + \langle \pi(\alpha_1),\ldots,\pi(\alpha_d) \rangle$. Moreover, $A(b-a)=\langle \pi(\alpha_1),\ldots,\pi(\alpha_d) \rangle$.
\end{proposition}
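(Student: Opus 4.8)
The plan is to use the two extra variables in \eqref{eq:2} as bookkeeping devices. Writing $\ell(x_1,\ldots,x_p)=(b_1-a_1)x_1+\cdots+(b_p-a_p)x_p$ and letting $M\subseteq\mathbb{N}^{p+2}$ be the monoid of non-negative solutions of \eqref{eq:2} generated by $A$, I would call the value of the last coordinate of a vector its \emph{weight}. The slack variable $x_{p+1}$ converts the inequality \eqref{eq:3} into the equality \eqref{eq:2} at weight $0$, while $x_{p+2}$ charges one threshold $\alpha+\beta$ per unit, so that weight $1$ encodes \eqref{eq:1}. Concretely: if $\ell(x)\geq 0$ then $(x_1,\ldots,x_p,\ell(x),0)\in M$, and every weight-$0$ element $v\in M$ satisfies $\ell(\pi(v))=v_{p+1}\geq 0$; likewise $\ell(x)\geq\alpha+\beta$ if and only if $(x_1,\ldots,x_p,\ell(x)-(\alpha+\beta),1)\in M$. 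Thus $\pi$ identifies $A(b-a)$ with the weight-$0$ part of $M$ and the solution set of \eqref{eq:1} with the weight-$1$ part of $M$.

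The engine is a counting argument on weights: in any expansion $v=\sum_i\lambda_i\alpha_i$ over the generators one has $v_{p+2}=\sum_i\lambda_i\alpha_{i_{p+2}}$ with every $\alpha_{i_{p+2}}\geq 0$. For the ``moreover'' assertion I would take $x\in A(b-a)$, lift it to the weight-$0$ element $(x,\ell(x),0)\in M$, and expand it over $A$. Since the left-hand side has weight $0$ and all generators have non-negative last coordinate, only the weight-$0$ generators $\alpha_1,\ldots,\alpha_d$ can occur; applying $\pi$ writes $x$ as a combination of $\pi(\alpha_1),\ldots,\pi(\alpha_d)$. The reverse inclusion is immediate since each $\pi(\alpha_i)$ with $i\leq d$ satisfies $\ell(\pi(\alpha_i))=\alpha_{i_{p+1}}\geq 0$, hence lies in $A(b-a)$, which is a monoid. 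This yields $A(b-a)=\langle\pi(\alpha_1),\ldots,\pi(\alpha_d)\rangle$.

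For the solution set of \eqref{eq:1} I would run the same argument from a weight-$1$ lift $(x,\ell(x)-(\alpha+\beta),1)\in M$. Reading off the last coordinate of its expansion gives $\sum_i\lambda_i\alpha_{i_{p+2}}=1$, so no generator of weight $\geq 2$ can appear and exactly one generator of weight $1$ appears with multiplicity one. Hence the lift equals $\alpha_j+\sum_{i=1}^{d}\lambda_i\alpha_i$ for some $j\in\{d+1,\ldots,g\}$, and $\pi$ places $x$ in $\{\pi(\alpha_{d+1}),\ldots,\pi(\alpha_g)\}+\langle\pi(\alpha_1),\ldots,\pi(\alpha_d)\rangle$. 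Conversely each $\pi(\alpha_j)$ with $d<j\leq g$ satisfies $\ell(\pi(\alpha_j))=\alpha_{j_{p+1}}+(\alpha+\beta)\geq\alpha+\beta$, so it solves \eqref{eq:1}; and since $\langle\pi(\alpha_1),\ldots,\pi(\alpha_d)\rangle\subseteq A(b-a)$ and $\ell$ is additive with non-negative values there, adding any such element preserves \eqref{eq:1}.

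The only delicate point is that $A$ may also contain generators of weight $\geq 2$, which the statement simply omits; the weight accounting above is exactly what guarantees that such generators never contribute to a weight-$0$ or weight-$1$ element, so that both conclusions depend only on $\alpha_1,\ldots,\alpha_g$. I expect no obstacle beyond keeping the two liftings and this bookkeeping consistent; note in particular that the weight-$1$ part of $M$ is not a submonoid, which is why the argument must pass through the whole monoid $M$ and its generating set rather than trying to ``generate'' the solution set of \eqref{eq:1} directly.
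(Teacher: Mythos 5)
Your proof is correct, but it cannot be compared with ``the paper's own proof'' in the usual sense, because the paper offers none: Proposition~\ref{prop11} is stated there as a direct consequence of the results of Section~2 of \cite{london}, with no argument given. Your proposal replaces that external citation with a complete elementary proof, and the mechanism you use --- grading the solution monoid $M$ of \eqref{eq:2} by the last coordinate --- is exactly the right one. Writing, as you do, $\ell(x)=(b_1-a_1)x_1+\cdots+(b_p-a_p)x_p$, the two lifts $(x,\ell(x),0)$ and $\bigl(x,\ell(x)-(\alpha+\beta),1\bigr)$ translate membership in $A(b-a)$ and satisfaction of \eqref{eq:1}, respectively, into membership in $M$; expanding a lift over the generating set $A$ and reading off the last coordinate forces, in the first case, only the weight-$0$ generators $\alpha_1,\ldots,\alpha_d$ to occur and, in the second case, exactly one weight-$1$ generator $\alpha_j$ (with coefficient $1$) plus weight-$0$ generators. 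Your converse inclusions, via additivity of $\ell$ and the fact that $\ell\geq 0$ on $A(b-a)$, are also right. You moreover handle correctly two points a hasty write-up would botch: a generating set of $M$ (even the minimal one) may genuinely contain elements of weight at least $2$, and your counting is what shows they are irrelevant to both conclusions; and the weight-$1$ slice of $M$ is not a submonoid, so one must argue through $M$ itself rather than try to generate the solution set of \eqref{eq:1} directly. What the paper's citation buys is brevity; what your argument buys is genuine self-containedness, which is in the spirit of the authors' own stated aim (compare Remark~\ref{rem12}, introduced precisely to avoid sending the reader to \cite{london} for the computations).
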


An immediate consequence of the above proposition is the next result.

\begin{corollary}\label{cor12}
	\begin{enumerate}
		\item ${\mathcal D} = \{\pi(\alpha_1),\ldots,\pi(\alpha_d)\}$.
		\item ${\mathcal C}=\mathrm{minimals}_{\leq_{A(b-a)}}\{\pi(\alpha_{d+1}),\ldots,\pi(\alpha_g)\}$.
	\end{enumerate}
\end{corollary}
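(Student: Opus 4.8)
The plan is to deduce both items directly from Proposition~\ref{prop11}, combined with the identity ${\mathcal A} = {\mathcal C} + \langle {\mathcal D} \rangle$ recalled from \cite{london} at the start of this section. Item (1) requires essentially no work: Proposition~\ref{prop11} states that $A(b-a) = \langle \pi(\alpha_1),\ldots,\pi(\alpha_d) \rangle$, so $\{\pi(\alpha_1),\ldots,\pi(\alpha_d)\}$ is a finite system of generators of $A(b-a)$, and this is exactly what ${\mathcal D}$ is taken to be.

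For item (2) I would set $E = \{\pi(\alpha_{d+1}),\ldots,\pi(\alpha_g)\}$ and first record that $E \subseteq {\mathcal A}$: for each index $i$ with $d < i \leq g$ the vector $\alpha_i$ solves \eqref{eq:2} with last coordinate equal to $1$, so $(b_1-a_1)\alpha_{i_1}+\cdots+(b_p-a_p)\alpha_{i_p} = \alpha_{i_{p+1}} + (\alpha+\beta) \geq \alpha+\beta$, which means $\pi(\alpha_i)$ satisfies \eqref{eq:1} and hence lies in ${\mathcal A}$. Then I would invoke Proposition~\ref{prop11} to write ${\mathcal A} = E + \langle \pi(\alpha_1),\ldots,\pi(\alpha_d) \rangle = E + A(b-a)$. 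Since ${\mathcal C} = \mathrm{minimals}_{\leq_{A(b-a)}}{\mathcal A}$ by definition, the claim reduces to the purely order-theoretic equality $\mathrm{minimals}_{\leq_{A(b-a)}}{\mathcal A} = \mathrm{minimals}_{\leq_{A(b-a)}}E$.

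To establish that equality I would use the decomposition above: any $x \in {\mathcal A}$ can be written $x = e + m$ with $e \in E$ and $m \in A(b-a)$, so that $x - e = m \in A(b-a)$, that is, $e \leq_{A(b-a)} x$. For the inclusion $\mathrm{minimals}({\mathcal A}) \subseteq \mathrm{minimals}(E)$, if $x$ is $\leq_{A(b-a)}$-minimal in ${\mathcal A}$, then $e \leq_{A(b-a)} x$ with $e \in {\mathcal A}$ forces $e = x$ (otherwise $e <_{A(b-a)} x$ contradicts minimality), whence $x \in E$; and being minimal in the larger set ${\mathcal A}$ it is a fortiori minimal in the subset $E$. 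For the reverse inclusion, if $e \in \mathrm{minimals}(E)$ and some $x \in {\mathcal A}$ satisfied $x <_{A(b-a)} e$, then writing $x = e' + m'$ with $e' \in E$ gives $e' \leq_{A(b-a)} x <_{A(b-a)} e$, so $e' <_{A(b-a)} e$ by transitivity, contradicting the minimality of $e$ in $E$; thus $e \in \mathrm{minimals}({\mathcal A})$.

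I expect the only delicate point --- the main obstacle --- to be this final order-theoretic equality of the two minimal sets, and in particular making the two implications airtight. It rests on exactly two facts recalled earlier: that $\leq_{A(b-a)}$ is a genuine (antisymmetric, transitive) order, proved in \cite{london}, so that $e \leq_{A(b-a)} x$ together with $e \neq x$ really yields $e <_{A(b-a)} x$; and that $A(b-a)$ is a submonoid containing $0$, which is what makes the factorization $x = e + m$ produce the relation $e \leq_{A(b-a)} x$ in the first place. Everything else is a direct transcription of Proposition~\ref{prop11}.
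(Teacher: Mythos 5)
Your proof is correct and follows the same route the paper intends: the paper states Corollary~\ref{cor12} without proof as an ``immediate consequence'' of Proposition~\ref{prop11}, and your argument simply fills in the details of that deduction, reading off item (1) from the equality $A(b-a)=\langle \pi(\alpha_1),\ldots,\pi(\alpha_d)\rangle$ and item (2) from the decomposition ${\mathcal A}=\{\pi(\alpha_{d+1}),\ldots,\pi(\alpha_g)\}+A(b-a)$ together with the fact that $\leq_{A(b-a)}$ is a partial order. In particular, your order-theoretic verification that $\mathrm{minimals}_{\leq_{A(b-a)}}{\mathcal A}=\mathrm{minimals}_{\leq_{A(b-a)}}\{\pi(\alpha_{d+1}),\ldots,\pi(\alpha_g)\}$ is exactly the content the paper leaves implicit, and it is carried out correctly.
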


\begin{remark}\label{rem12}
	Let us observe that it is really easy to determine ${\mathcal C}$ using \eqref{eq:3}. In effect, let ${\mathcal F} = \{(x_1,\ldots,x_{p+2}) \in {\mathbb N}^{p+2} \mid (b_1-a_1)x_1+\cdots+(b_p-a_p)x_p-x_{p+1} - (\alpha + \beta)x_{p+2} = 0 \}$. It is well known (see \cite{monoides}) that ${\mathcal F}$ is a finitely generated monoid of $({\mathbb N}^{p+2},+)$ and, in addition, its set of minimal generators coincide with the set of minimal elements (with respect the usual order in ${\mathbb N}^{p+2}$) of ${\mathcal F} \setminus \{(0,\ldots,0)\}$. Moreover, from \cite{pottier}, we have that, if $\{(x_1,\ldots,x_{p+2})$ is a minimal element of ${\mathcal F} \setminus \{(0,\ldots,0)\}$, then $x_1+\cdots+x_{p+2} \leq |b_1-a_1| +\cdots+ |b_p-a_p| +\alpha+\beta+2$.
\end{remark}

\begin{example}\label{exmp13}
	We are going to compute the minimal system of generators of $S=S\big((4,5),(3,6),3,1\big)\cup\{0\}$ using Algorithm~\ref{alg10}.
	
	Let ${\mathcal F}= \{(x_1,x_2,x_3,x_4)\in {\mathbb N}^4 \mid -x_1+x_2-x_3-4x_4=0 \}$. Having in mind Remark~\ref{rem12}, it is easy to see that
	$$\mathrm{minimals}\big({\mathcal F} \setminus \{(0,0,0,0)\}\big) = \{(1,1,0,0),(0,1,1,0),(0,4,0,1)\}.$$
	Now, by Corollary~\ref{cor12}, we have that ${\mathcal D}=\{(1,1),(0,1)\}$ and ${\mathcal C}=\{(0,4)\}$. Consequently, $D=[9,9]\cup[5,6]=\{5,6,9\}$ and $C=[23,23]=\{23\}$. Finally, by applying Algorithm~\ref{alg8}, the smallest $\{5,6,9\}$-monoid containing $\{23\}$ is 
	$$S=\{0,23,28,29,32,33,34,35,37,\to\}.$$
	By the way, let us observe that $S$ is the set of solutions for the example in the introduction.
\end{example}

\section{A brief remark on numerical semigroups}

By Proposition~\ref{prop2}, we know that $S(a,b,\alpha,\beta) \cup \{0\}$ is a submonoid of $({\mathbb N},+)$. On the other hand, we have that the sets of solutions in Examples~\ref{exmp5}, \ref{exmp9}, and \ref{exmp13} are \emph{numerical semigroups} (that is, a submonoid $S$ of $({\mathbb N},+)$ such that ${\mathbb N} \setminus S$ is finite). In the next example the answer is related with a monoid that is not a numerical semigroup.

\begin{example}
	Let us calculate $S=S\big((4,5),(3,5),0,0\big)$ (observe that $0\in S$). First, we need a system of generators for $A(-1,0)=\{(x,y) \in {\mathbb N}^2 \mid -x+0y \geq 0 \}$. For that, we compute the minimal elements of $B(-1,0) \setminus \{(0,0,0)\}$, where $B(-1,0)=\{ (x,y,z) \in {\mathbb N}^3 \mid -x+0y-z=0 \}$. It is obvious that $(0,1,0)$ is the unique minimal element of $B(-1,0) \setminus \{(0,0,0)\}$ and, in consequence, $\{(0,1)\}$ is a system of generators for $A(-1,0)$. Therefore, by applying Theorem~\ref{thm4}, we conclude that $[5,5] =\{5\}$ is a system of generators for $S$. Thus, $S=\langle 5 \rangle = 5{\mathbb N}$.
\end{example}

At this point, a question arise in a natural way: when is $S(a,b,\alpha,\beta) \cup \{0\}$ a numerical semigroup? In order to give an answer, we will study three cases.

\vspace{6pt} \noindent \textbf{Case~1.} $a_i>b_i$ for all $i\in\{1,\ldots,r\}$ and $(\alpha,\beta) \in {\mathbb N}^2$ or $a_i\geq b_i$ for all $i\in\{1,\ldots,r\}$ and $(\alpha,\beta)\in {\mathbb N}^2\setminus (0,0)$.

It is obvious that $$a_1x_1 +\cdots+ a_px_p + \alpha > b_1x_1 +\cdots+ b_px_p - \beta$$ for all $(x_1,\ldots x_p) \in {\mathbb N}^p$. Thus, $S(a,b,\alpha,\beta) = \emptyset$ and $S(a,b,\alpha,\beta) \cup \{0\} = \{0\}$ (that is, the trivial submonoid).

\vspace{6pt} \noindent \textbf{Case~2.} $a_i\geq b_i$ for all $i\in\{1,\ldots,r\}$,  there exists $i^*\in\{1,\ldots,r\}$ such that $a_{i^*}=b_{i^*}$, and $(\alpha,\beta)=(0,0)$. 

If $E=\big\{ i\in \{1,\ldots,r\} \mid a_i=b_i \big\}$ and $I=\big\{ i\in \{1,\ldots,r\} \mid a_i>b_i \big\}$, then it is clear that $E\cup I=\{1,\ldots,r \}$ and $E\cap I=\emptyset$. On the other hand,
$$a_1x_1 +\cdots+ a_px_p \leq b_1x_1 +\cdots+ b_px_p \Leftrightarrow 0 \leq \sum_{i\in I} (b_i-a_i)x_i.$$
Since $b_i-a_i<0$ for all $i\in I$ and $(x_1,\dots,x_p) \in {\mathbb N}^p$, we have that the last inequality is true if and only if $x_i=0$ for all $i\in I$. Therefore,
$$S(a,b,0,0)=\left\{ n\in {\mathbb N} \Bigm| \sum_{i\in E} a_i x_i \leq n \leq \sum_{i\in E} b_i x_i \mbox{ for some } (x_i)_{i\in E} \in {\mathbb N}^r \right\},$$
where $r$ is the cardinality of $E$. Now, since $\sum_{i\in E} a_i x_i = \sum_{i\in E} b_i x_i$ for all $(x_i)_{i\in E} \in {\mathbb N}^r$, we conclude that $S(a,b,0,0)$ is the submonoid generated by the set $A=\{a_i \mid i\in E\}$. In addition, $S(a,b,0,0)$ will be a numerical semigruoup if and only if $\gcd(A)=1$ (see \cite[Lemma~2.1]{springer}).

\vspace{6pt} \noindent \textbf{Case~3.} There exists $j\in\{1,\ldots,r\}$ such that $a_{j}<b_{j}$. 

It is clear that $(b_j-a_j)x_j-\beta-\alpha>0$ for a suitable choice of $x_j$. Therefore, the numerical semigroup generated by $\{a_jx_j+\alpha,a_jx_j+\alpha+1 \}$ is a subset of $S(a,b,\alpha,\beta) \cup \{0\}$ which, in consequence, is also a numerical semigroup.

\section{Conclusion}

Starting from a real world situation, the aim of this work has been to compute the set
$$S(a,b,\alpha,\beta)=\left\{ \begin{array}{c|c} 
\!\!\! n\in {\mathbb N} \! & \begin{array}{c}
\!\!\! a_1x_1 +\cdots+ a_px_p + \alpha \leq n \leq b_1x_1 +\cdots+ b_px_p - \beta \\[3pt] \mbox{ for some } (x_1,\ldots x_p) \in {\mathbb N}^p
\end{array}
\end{array} \!\!\!\!\! \right\},$$
where $a=(a_1,\ldots,a_p), b=(b_1,\ldots,b_p) \in {\mathbb N}^p$ and $\alpha,\beta \in {\mathbb N}$.

We have achieved our purpose in two steps. Firstly, we have studied the case $(\alpha,\beta)=(0,0)$. Secondly, by means of an algorithm, we have solved the case $(\alpha,\beta)\not=(0,0)$.

It is interesting to observe that, in general, $S(a,b,\alpha,\beta) \cup \{0\}$ is a submonoid of $({\mathbb N},+)$ and, in some cases, a numerical semigroup.


\end{document}